\documentclass[11pt]{amsart}

\usepackage{amssymb, graphicx}
\usepackage{bbm} 

\renewcommand{\emptyset}{\varnothing}

\usepackage{enumerate,color}

\usepackage{amsfonts}
\usepackage{amsmath}
\usepackage{verbatim}

\usepackage[pagebackref,colorlinks,linkcolor=red,citecolor=blue,urlcolor=blue,hypertexnames=true]{hyperref}

\newcommand{\cC}{\mathcal{C}}

\def\beq{ \begin{equation} }
\def\eeq{ \end{equation} }

\def\bep{\begin{proof}}
\def\eep{\end{proof}}

\def\ben{ \begin{enumerate} }
\def\een{ \end{enumerate} }

\newcommand{\ol}{\overline}
\newcommand{\ksi}{\xi}
\newcommand{\tr}{\mathrm{tr}}

\addtolength{\textwidth}{3cm} \addtolength{\oddsidemargin}{-1.5cm}
\addtolength{\evensidemargin}{-1.5cm}
\allowdisplaybreaks[1]

\newtheorem{theorem}{Theorem}[section]
\newtheorem{proposition}[theorem]{Proposition}

\newtheorem{corollary}[theorem]{Corollary}

\theoremstyle{definition}

\newtheorem{lemma}[theorem]{Lemma}

\newtheorem{remark}[theorem]{Remark}

\newtheorem{example}[theorem]{Example}

\newcommand{\CC}{\mathbb{C}}
\newcommand{\R}{\mathbb{R}}

\newcommand{\N}{\mathbb{N}}

\newcommand{\X}{\langle X\rangle}
\newcommand{\F}{\mathbb{F}}
\newcommand{\sS}{\mathcal{S}}
\newcommand{\wt}{\widetilde}
\newcommand{\ti}{\tilde}
\newcommand{\cyc}{\stackrel{\mathrm{cyc}}{\thicksim}}
\newcommand{\g}{\sigma} 

\linespread{1.0334}

\title[A tracial Nullstellensatz]{A tracial Nullstellensatz}

\author[I. Klep]{Igor Klep${}^{1}$}
\address{Igor Klep, Department of Mathematics, 
The University of Auckland, New Zealand}
\email{igor.klep@auckland.ac.nz}
\thanks{${}^1$Supported by the Faculty Research Development Fund (FRDF) of The
University of Auckland (project no. 3701119). Partially supported by the Slovenian Research Agency grant P1-0222.}

\author[\v S. \v Spenko]{\v Spela \v Spenko${}^{2}$}
\address{\v Spela \v Spenko,  Institute of  Mathematics, Physics, and Mechanics,  Ljubljana, Slovenia} \email{spela.spenko@imfm.si}
\thanks{${}^{2}$Supported by the Slovenian Research Agency and in part by the Slovene Human Resources Development and Scholarship Fund.}

\subjclass[2010]{Primary 16R30, 47A57; Secondary 16S50, 15A24}
\date{\today}
\keywords{free algebra, trace, Nullstellensatz, polynomial identity, degree bounds}

 \begin{document}

\begin{abstract}
The main result of this note is a tracial Nullstellensatz for free noncommutative polynomials evaluated
at tuples of matrices of all sizes: Suppose $f_1,\ldots,f_r,f$ are free polynomials, and
$\tr(f)$ vanishes whenever all $\tr(f_j)$ vanish. Then either $1$ or $f$ is a linear combination of the
$f_j$ modulo sums of commutators.
\end{abstract}

\maketitle

\section{Introduction}

Hilbert's Nullstellensatz is a classical result in
algebraic geometry. Over an algebraically closed field 
it characterizes polynomials vanishing on the zero set of a set of polynomials.
Due to its importance it has been generalized and extended in 
many different directions, including to free algebras.
For instance, 
Amitsur's Nullstellensatz
\cite{Ami} describes free noncommutative polynomials vanishing
on the zero set of a given finite set of free polynomials in
a full matrix algebra. 
In another direction,  the
Nullstellensatz of Bergman \cite{HM}
studies a weaker, directional notion of vanishing but in a dimension-independent context 
(see \cite{CHMN13} for recent  generalizations) allowing for a stronger conclusion.
Namely, unlike in Hilbert's and Amitsur's Nullstellensatz,
no powers are needed in the obtained algebraic certificate.
We also refer the reader to \cite{BK11} for a survey of free Nullstellens\"atze.

\smallskip

 In this short article we  
focus on \emph{vanishing trace} of free noncommutative
polynomials.
The relationship between sums of commutators and vanishing trace of a
free polynomial is discussed e.g.~in \cite{CGM,KS,BK11}.
 Our main result, Theorem \ref{spur}, characterizes free polynomials $f$
whose trace vanishes whenever the traces of polynomials $f_1,
\ldots, f_r$ vanish. It is presented in Section \ref{sec:main},
after preliminaries in  Section \ref{sec:not}.
The main ingredients
in the proof of our main result are effective degree bounds
on Hilbert's Nullstellensatz due to Koll\'ar \cite{Kol} (see also Sombra \cite{Som} and Jelonek \cite{Jel}),
as well as the theory of polynomial identities \cite{Row,Pro}.
Finally, in Section \ref{sec:mp} we 
solve a tracial moment problem  by dualizing the statement
of Theorem \ref{spur}.

\section{Preliminaries}\label{sec:not}

\subsection{Notation}
Let $\F$ be a field of characteristic $0$ and let $M(\F)$ stand for $\bigcup_n  M_n(\F)$. We denote the free associative algebra in the variables $x_1,\dots,x_g$ by $\F\X$. 
The free noncommutative polynomials in $\F\X$ of degree at most $d$ are denoted by $\F\X_d$, 
while $\F\X_d'$ is the vector subspace of $\F\X_d$ consisting of all elements with zero constant term. 
We denote by $\X$ the  monoid generated by $x_1,\dots,x_g$, and by $\X_d$ words in $\X$ of degree at most $d$. 

We say that polynomials $f,h\in \F\X$ are {\em cyclically equivalent} if $f-h$ is a sum of commutators in $\F\X$ and write $f\cyc h$.
\subsection{An effective Nullstellensatz}
Let us recall an effective version of Hilbert's Nullstellensatz, 
giving bounds on the polynomials needed in the B\'ezout identity. 
We present a variant that combines \cite{Kol,Jel}. 
Define
\[
\begin{split}
N(n,d_1,\dots,d_r)&=\left\{\begin{array}{ll}
d_1\cdots d_r & \textrm{if $r\leq n$},\\
d_1\cdots d_{n-1}d_r & \textrm{if $r>n>1$},\\
d_1+d_r-1 & \textrm{if $r>n=1$},
\end{array} \right.
\\
N'(n,d_1,\dots,d_r)&=\left\{\begin{array}{ll}
N(d_1,\dots, d_r) & \textrm{if $r\leq n$},\\
N(d_1,\dots, d_r) & \textrm{if $r>n\geq 1$ and $d_r>2$},\\
2N(d_1,\dots, d_r)-1 & \textrm{if $r>n>1$ and $d_r\leq 2$},\\
2d_1-1 & \textrm{if $r>n=1$ and $d_r\leq 2$},
\end{array} \right.
\end{split}
\]
for $d_1\geq \dots\geq d_r$.

\begin{theorem}[Koll\'ar--Jelonek]\label{JelKol}
Let $\F$ be an algebraically closed field and let $f_1, . . . , f_r \in \F[x_1, \dots , x_n]$ be commutative polynomials without a common zero. 
Let $d_i=\deg f_i$ and assume $d_1\geq \dots\geq d_r$.  
Then there exist $h_1,\dots,h_r \in \F[x_1,...,x_n]$ satisfying
$$1 = h_1f_1 + \cdots + h_rf_r,$$ 
with $\deg h_if_i \leq N'(n,d_1,\dots,d_r)$ for $1\leq i\leq r$.
\end{theorem}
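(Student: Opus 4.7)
The plan is to derive the degree-bounded B\'ezout identity from the classical Hilbert Nullstellensatz by using effective intersection theory in projective space. First I would homogenize, writing $F_i\in\F[x_0,x_1,\dots,x_n]_{d_i}$ with $f_i(x)=F_i(1,x_1,\dots,x_n)$. The hypothesis that the $f_i$ have no common zero in $\F^n$ becomes the statement that the projective zero locus $V(F_1,\dots,F_r)\subset\mathbb{P}^n$ is contained in the hyperplane at infinity $H_\infty=\{x_0=0\}$. The whole problem then turns into an effective ideal-membership statement: $x_0^N$ belongs to the homogeneous ideal $(F_1,\dots,F_r)$ for some bounded $N$; dehomogenizing and rearranging recovers $1=\sum h_if_i$ with the desired degree control.

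Next I would cut down the number of generators. After a generic $\F$-linear change of coordinates and a generic $\F$-linear reparametrization of the $F_i$, one may assume the $F_i$ are in sufficiently general position. If $r\le n$, then B\'ezout's theorem bounds the degree of the intersection scheme by $d_1\cdots d_r$. If $r>n$, one uses a standard trick: replace $F_1,\dots,F_{r-1}$ by $n-1$ generic $\F$-linear combinations of them (preserving the largest degrees $d_1\ge\cdots\ge d_{n-1}$) while retaining $F_r$ of smallest degree $d_r$; the resulting $n$ hypersurfaces again cut out a subscheme of degree at most $d_1\cdots d_{n-1}d_r$, still avoiding affine common zeros. In the $n=1$ case the univariate resultant gives the bound $d_1+d_r-1$ directly. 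This explains the combinatorial shape of $N(n,d_1,\dots,d_r)$.

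The main obstacle, and the deep heart of Koll\'ar's theorem, is converting the degree estimate on the intersection scheme into an effective membership bound for $x_0^N$. Here I would invoke the Castelnuovo--Mumford regularity of the Koszul complex associated with the chosen generators, together with Serre vanishing on $\mathbb{P}^n$, to produce coefficients $H_i$ with $\deg H_iF_i\le N$. Equivalently, one works on a suitable log-resolution and applies Kodaira-type vanishing, which is Koll\'ar's original route; Sombra's alternative uses arithmetic B\'ezout, and Jelonek's route uses effective Nullstellensatz on affine varieties with control by degree. I anticipate this step to be by far the hardest part: it is where the delicate cohomological bookkeeping happens, and the elementary geometric heuristics do not suffice.

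Finally, the upgrade from $N$ to $N'$ is Jelonek's refinement handling the corner case $r>n$ with $d_r\le 2$. Here Koll\'ar's generic reduction may fail to give a proper intersection because quadrics and linear forms can be absorbed into the geometry in pathological ways; a doubling-type argument (essentially replacing $F_r$ by $F_r^2$ or enlarging the system and then contracting) recovers the correct order of magnitude $2N(n,d_1,\dots,d_r)-1$. In our eventual application we only need that \emph{some} explicit bound exists depending on $n$ and the $d_i$, so the precise form of $N'$ is secondary; what matters is its effectiveness, which the Koll\'ar--Jelonek proof provides.
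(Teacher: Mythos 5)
The paper does not prove Theorem~\ref{JelKol}; it quotes it as a black box from Koll\'ar \cite{Kol} and Jelonek \cite{Jel}, and the only feature actually used later (in Lemma~\ref{sibekspur}) is that the bound $N'(n,d_1,\dots,d_r)$ stabilizes to $d_1\cdots d_r$ once $r\le n$, i.e.\ is eventually independent of the number of variables. You correctly identify this last point, which is what matters for the application; but you are attempting to reconstruct a deep theorem that the authors deliberately treat as a citation, so there is no ``paper's own proof'' to match against.

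As a sketch of the Koll\'ar--Jelonek argument, your outline has the right high-level shape (homogenize; reduce the Nullstellensatz to effective membership of $x_0^N$; cut down to at most $n$ forms so that B\'ezout-type degree counts apply; then prove an effective membership/vanishing bound), but several steps are more delicate than you indicate. First, ``generic $\F$-linear combinations of $F_1,\dots,F_{r-1}$'' of distinct degrees are not homogeneous forms, so the reduction to $n$ generators preserving the degree profile $d_1\ge\cdots\ge d_{n-1}\ge d_r$ requires genuine care (Koll\'ar combines generic combinations with multiplications by generic linear forms and a careful choice of which degrees to retain). Second, attributing ``log-resolution plus Kodaira-type vanishing'' to Koll\'ar's original route is inaccurate: that description fits the later Ein--Lazarsfeld multiplier-ideal proof; Koll\'ar's 1988 argument is essentially commutative-algebraic, built around Koszul complexes, a filtration by excess components, and a local duality/regularity estimate. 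Third, Jelonek's handling of the corner case $r>n$, $d_r\le 2$ is not a simple ``replace $F_r$ by $F_r^2$'' doubling; his paper develops a geometric degree theory for polynomial maps of affine varieties, and the factor $2N-1$ emerges from that analysis, not from squaring a generator. None of these gaps affects the paper, since the theorem is cited rather than proved, but a reader should not come away thinking the effective Nullstellensatz admits a short self-contained derivation along the lines you sketch.
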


A core feature of this theorem we shall use is that the obtained degree bounds are independent of the number of variables $n$ (for large enough $n$).

\subsection{Trace identities}\label{TR}
An {\em algebra with trace}  is an algebra $A$ equipped with an additional structure, that is a linear map $\tr:A\to A$ satisfying the following properties
$$ \tr(ab)=\tr(ba),\quad a\,\tr(b)=\tr(b)\,a,\quad \tr(\tr(a)b)=\tr(a)\tr(b) $$
for all $a,b\in A.$
A morphism between algebras with trace preserves the trace and such algebras form a category. 

 The free algebra in this category is the algebra of free noncommutative polynomials in the variables $x_1,\dots,x_g$ over the polynomial algebra $T$  in the infinitely many variables  $\tr(w)$, where $w$ runs over all representatives of the cyclic equivalence classes of words in the variables $x_1,\dots,x_g$. Its elements are {\em trace polynomials} and elements of $T$ are {\em pure trace polynomials}. 
The degree of a trace monomial $\tr(w_1)\cdots\tr(w_m)v$, $w_i,v\in \X$,  equals $|v|+\sum_i |w_i|$, where $|u|$ denotes the length of a word $u$.
The degree of a trace polynomial is the maximum of the degrees of its trace monomials.

{\em Trace identities} of the matrix algebra $M_n(\F)$ are the elements in the kernel of the evaluation map from the free algebra with trace to $M_n(\F)$. {\em Pure trace identities} are trace identities that belong to $T$. 
By \cite[Theorem 4.5]{Pro}, there are no trace identities of $M_n(\F)$ of degree  less than $n$ and no pure trace identities of degree  less than $n+1$.

\section{Main Result}\label{sec:main}

Throughout this section let $\F$ be an algebraically closed field of characteristic $0$. 
The following is the main result of this paper. It is proved in Subsection \ref{subsec:pf} below.
We remark that a very special case of Theorem \ref{spur} was obtained in
\cite{BK11} by different means.

\begin{theorem}[Spurnullstellensatz]\label{spur}
Let $f_1,\dots,f_r,f\in \F\X$.
The implication
\begin{equation}\label{trpogoj}
\tr(f_1(A))=\dots=\tr(f_r(A))=0 \quad \implies \quad \tr(f(A))=0
\end{equation}
holds for every $n$ and all $A\in M_n(\F)^g$ if and only if  
$f$ is cyclically equivalent to a linear combination of $f_i$'s or a linear combination of $f_i$'s is cyclically equivalent to a nonzero scalar.
\end{theorem}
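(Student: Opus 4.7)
The easy direction $(\Leftarrow)$ is immediate: if $f\cyc\sum c_if_i$, then $\tr(f(A))=\sum c_i\tr(f_i(A))$ vanishes whenever the $\tr(f_i(A))$ do; if $\sum c_if_i\cyc c\in\F\setminus\{0\}$, then $\sum c_i\tr(f_i(A))=nc\neq0$ on every $A\in M_n(\F)^g$, so the premise of \eqref{trpogoj} never holds.

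For the hard direction, I would work inside the polynomial ring $R:=\F[\tr(w):w\text{ nonempty word mod }\cyc]$, with the convention that $\tr(1)$ evaluates to the scalar $n$ on $M_n(\F)^g$. Write $f_i=c_i+f_i^\circ$ and $f=c+f^\circ$ with constant-free tails, and put $L_i:=\tr(f_i^\circ)$, $L:=\tr(f^\circ)\in R$, so that $\tr(f_i(A))=nc_i+L_i(A)$ and $\tr(f(A))=nc+L(A)$. After discarding any $f_i$ that is linearly dependent on the others modulo commutators, I may assume that $\sum\mu_ic_i=0$ together with $\sum\mu_iL_i=0$ forces $\mu=0$. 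I then split on whether $L_1,\dots,L_r$ are linearly independent in $R$. If they are dependent, pick $\mu\neq0$ with $\sum\mu_iL_i=0$; the reduction forces $\sum\mu_ic_i\neq0$, and hence $\sum\mu_if_i\cyc\sum\mu_ic_i\in\F\setminus\{0\}$, yielding the second alternative.

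So suppose $L_1,\dots,L_r$ are linearly independent in $R$. Extending them to a polynomial generating set of $R$, the substitution $\sigma_n\colon R\to R'$ sending $L_i\mapsto-nc_i$ is a surjective ring homomorphism onto a polynomial ring $R'$ (an integral domain), killing each $\tr(f_i)$. I first claim $Z_n:=\{A\in M_n(\F)^g:\tr(f_j(A))=0\ \forall j\}\neq\emptyset$ for all sufficiently large $n$: otherwise Hilbert's Nullstellensatz gives $1=\sum h_i\tr(f_i)$ in $\F[M_n(\F)^g]$ with $\deg h_i$ bounded by a constant depending only on the $\deg f_j$, by Theorem~\ref{JelKol} combined with the Rabinowitsch trick (using the $n$-independence of $N'$ noted there). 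Averaging by the Reynolds operator places the $h_i$ in $\F[M_n(\F)^g]^{GL_n}$; by Procesi's theorem (Section~\ref{TR}), the resulting pure-trace identity of bounded word-degree lifts to the same identity in $R$ once $n$ exceeds that bound, and applying $\sigma_n$ yields $1=0$, a contradiction. Given a large $n$ with $Z_n\neq\emptyset$, the hypothesis forces $\tr(f)|_{Z_n}\equiv0$, and the same Nullstellensatz--Theorem~\ref{JelKol}--Rabinowitsch--Reynolds--Procesi pipeline produces an identity $(\tr f)^s=\sum h_i\tr(f_i)$ in $R$. Applying $\sigma_n$ gives $\sigma_n(\tr f)^s=0$ in the domain $R'$, hence $\sigma_n(\tr f)=0$. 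Writing $L=\sum\lambda_iL_i+L^\perp$ in the chosen basis of linear forms, $\sigma_n(\tr f)=n(c-\sum\lambda_ic_i)+L^\perp$; matching the scalar and the linear parts in $R'$ forces $c=\sum\lambda_ic_i$ and $L^\perp=0$. Therefore $\tr(f)=\sum\lambda_i\tr(f_i)$, equivalently $f\cyc\sum\lambda_if_i$, the first alternative.

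The main obstacle I foresee is the Procesi lifting step: one must ensure that the word-degrees of the Bezout identities (uniformly bounded by Kollár--Jelonek precisely because $N'$ does not depend on $gn^2$ for $n$ large) lie below the Procesi threshold $n+1$, so that the $GL_n$-invariant identity on $M_n(\F)^g$ descends to an honest identity in $R$ on which $\sigma_n$ can be applied and its consequences extracted via the integral-domain structure of $R'$.
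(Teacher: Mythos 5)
Your proof is correct, and it takes a genuinely different route from the paper's. The paper first establishes a weaker lemma (Lemma~\ref{sibekspur}, characterizing when the $\tr(f_i)$ have no common zero) and then applies it twice to augmented lists: once to $f_1,\dots,f_r,1+f$ (which have no common trace-zero precisely because of \eqref{trpogoj}) and once more to rule out a nonzero constant. This avoids the power version of the effective Nullstellensatz entirely, since Theorem~\ref{JelKol} as stated only covers the ``no common zero'' case. You instead invoke the power version $(\tr f)^s=\sum h_i\tr(f_i)$ (via Rabinowitsch, or equivalently Koll\'ar's full statement, which is not explicitly stated in the paper but is standard), and the exponent $s$ is then killed by your substitution homomorphism $\sigma_n$ because $R'$ is an integral domain. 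This is the main structural difference, and your route is legitimate as long as you note that $s$ and $\deg h_i$ must also be bounded independently of $n$ --- which they are, since Koll\'ar bounds $s\leq N'$ and $N'$ stabilizes for $n^2g\geq r+1$.

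The other genuine divergence is the finishing step. The paper works with the augmented row vectors $(\alpha_{ij})$ coming from $\tau_i=\tr(\ol{f_i})+\alpha_in$, brings them to reduced row echelon form inside the free commutative algebra $T$, and reads off the two alternatives from the shape of that echelon form. Your argument constructs a ring homomorphism $\sigma_n:R\to R'$ killing all $\tr(f_i)$ by sending $L_i\mapsto -nc_i$ (this is well-defined precisely because the $L_i$ extend to a free generating set after your case split on linear independence), applies it to the Bezout-type identity, and reads off $c=\sum\lambda_ic_i$, $L^\perp=0$ by comparing the degree-$0$ and degree-$1$ parts in $R'$. The two arguments are equivalent in power --- your $\sigma_n$ is essentially the projection implicit in the paper's echelon form --- but yours is arguably cleaner and more conceptual, while the paper's is more elementary. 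Your case split on whether the $L_i$ are independent plays the same role as the paper's observation that either $1$ appears in the row space of $(\alpha_{ij})$ or the $\tau_i$ can be completed to free generators. One minor presentational point: in your first application (showing $Z_n\neq\emptyset$) you mention the Rabinowitsch trick, but it is not needed there since ``no common zero'' is exactly the hypothesis of Theorem~\ref{JelKol}; it is needed only in the second application.
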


\begin{remark}
Note $\sum \lambda_i f_i\cyc1$ for some $\lambda_i\in \F$ does not necessarily imply that $f\cyc \sum \mu_i f_i$ for some $\mu_i\in \F$. For example, take $f_1=1, f=x_1$.
\end{remark}

\begin{example}
Theorem \ref{spur} fails in the dimension-dependent context: For each $n\in \N$ 
we give an example of polynomials $f_1,f_2$ such that $\tr(f_1),\tr(f_2)$ do not have a common zero on $M_n(\F)$, but for which there do not exist
$\lambda_1,\lambda_2\in \F$ and a polynomial identity $p$ of $M_n(\F)$ such that 
\beq\label{eq:central}
\lambda_1 f_1+\lambda_2 f_2\cyc 1+p.
\eeq

Let $c$ be a homogeneous central polynomial of $M_n(\F)$, which means that $c(A)\in \F$ for all $A\in M_n(\F)^g$ and $c$ does not vanish identically on  $M_n(\F)$ (see e.g.~\cite{Row}).
Take 
\[f_1=c,\; f_2=1+c^2.\] Then $\tr(f_1), \tr(f_2)$ do not have a common zero on $M_n(\F)$. 
If \eqref{eq:central} holds for some $\lambda_1,\lambda_2\in \F$ and a polynomial identity $p$ of $M_n(\F)$, 
then \[\lambda_1 \tr(c)+\lambda_2\Big(\frac{1}{n}\tr(c)^2+n\Big)=n.\] 
As $c$ is homogeneous, say of degree $k$, then 
\beq\label{eq:homog}
\lambda_1 \alpha^k\tr(c)+\lambda_2\Big(\frac{1}{n}\alpha^{2k}\tr(c)^2+n\Big)=n
\eeq
 for every $\alpha\in \F$. 
But $\F$ is infinite, so \eqref{eq:homog} cannot hold for every $\alpha\in \F$.
\end{example}

\subsection{Proof of Theorem \ref{spur}}\label{subsec:pf}

As a first step towards the proof of Theorem \ref{spur} we
prove its weaker variant, characterizing sets of polynomials $f_i$
whose traces do not have a common vanishing point.

\begin{lemma}\label{sibekspur}
Let  $f_1,\dots,f_r\in \F\X$.
If the equations 
\begin{equation}\label{pogoj0}
\tr(f_i(x_1,\dots,x_g))=0,\:1\leq i\leq r,
\end{equation}
  do not have a common solution in $M(\F)^g$, 
then $\sum \lambda_i f_i\cyc 1$ for some $\lambda_i\in \F$.
\end{lemma}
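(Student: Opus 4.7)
The plan is to combine the effective Nullstellensatz (Theorem~\ref{JelKol}) with invariant theory for the simultaneous conjugation action of $\mathrm{GL}_N$ on $M_N(\F)^g$ and with the degree bounds on trace identities from Subsection~\ref{TR}. First I would fix a matrix size $N$ large enough that $gN^2\ge r$ and view each $\tr(f_i)$ as a commutative polynomial of degree $d_i=\deg f_i$ in the $gN^2$ entries of $X=(X_1,\dots,X_g)$. Since by hypothesis these polynomials have no common zero on $M_N(\F)^g$, Theorem~\ref{JelKol} produces a B\'ezout identity
\[
1=\sum_{i=1}^r h_{i,N}(X)\,\tr(f_i(X))
\]
on $M_N(\F)^g$, with the uniform degree bound $\deg h_{i,N}\tr(f_i)\le D:=d_1\cdots d_r$, independent of $N$ once $gN^2\ge r$.

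Since both sides are $\mathrm{GL}_N$-invariant under simultaneous conjugation, applying the Reynolds operator lets me replace each $h_{i,N}$ by its $\mathrm{GL}_N$-average without altering the identity or the degree bound. Procesi's First Fundamental Theorem identifies each such invariant with the evaluation of a pure trace polynomial $P_{i,N}\in T$ of degree at most $D-d_i$, so $1=\sum_{i=1}^r P_{i,N}(X)\,\tr(f_i(X))$ on $M_N(\F)^g$. As $N$ varies, the tuples $(P_{1,N},\dots,P_{r,N})$ all lie in a fixed finite-dimensional $\F$-subspace of $T^r$ depending only on $D$ and the $d_i$'s, and the Procesi bound on the degrees of pure trace identities (Subsection~\ref{TR}) allows one to lift this identity formally to $T$ for all sufficiently large~$N$.

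The main obstacle is to pass from these polynomial trace coefficients $P_{i,N}$ to scalar ones. The desired conclusion $\sum_i \lambda_i f_i \cyc 1$ is equivalent to the $\F$-linear identity $\sum_i \lambda_i \tr(f_i)=\tr(1)$ in $T$, so one must coax the $P_{i,N}$ into pure scalars. I would exploit the length grading on $T$ (assigning weight $|w|$ to $\tr(w)$) together with the scaling substitution $X\mapsto tX$, which rescales $\tr(w(X))$ by $t^{|w|}$ and decomposes the B\'ezout identity into homogeneous components in $t$. Isolating the lowest-weight part should force the degree-zero components of the $P_{i,N}$ to yield the required scalars $\lambda_i$ (essentially given by a uniform version of $N P_{i,N}(0)$), and the finite-dimensional parameter space in which $(P_{1,N},\dots,P_{r,N})$ lives should guarantee that a single choice of $(\lambda_i)$ works across all large $N$, completing the proof.
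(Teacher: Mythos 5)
Your first half tracks the paper's proof closely: apply the Koll\'ar--Jelonek effective Nullstellensatz to the commutative polynomials $\tr(f_i)$ on $M_N(\F)^g$, average the B\'ezout coefficients by the Reynolds operator so Procesi's First Fundamental Theorem lets you take them to be pure trace polynomials with a size-independent degree bound $D=d_1\cdots d_r$, and then for $N>D$ invoke the degree bound on pure trace identities to promote the B\'ezout identity to a formal identity $1=\sum_i P_i\,\tau_i$ in the free commutative ring $T$. Up to one omitted subtlety (one must replace $\tr(f_i)$ by $\tau_i=\tr(\ol{f_i})+\alpha_i N$, since $\tr(1)=N$ on $M_N$, not $1$), this is exactly what the paper does.

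The gap is in the final step, passing from the pure trace coefficients $P_i$ to scalars $\lambda_i$. Your proposal to scale $X\mapsto tX$ and isolate the lowest-weight component only constrains $\sum_i P_i(0)\,\alpha_i N=1$, where $\alpha_i$ is the constant term of $f_i$; it tells you nothing about $P_i(0)$ for those $i$ with $\alpha_i=0$, and it certainly does not show that $\sum_i NP_i(0)\,f_i$ is cyclically equivalent to a scalar (the higher-weight components of the B\'ezout identity need not vanish when the $P_i$ are truncated to their constant parts). Indeed the recipe $\lambda_i=NP_i(0)$ can give a wrong answer: with $\tau_1=1$, $\tau_2=t_1$ one has $1=(1+t_1)\tau_1-\tau_2$, and truncating yields $\lambda_1\tau_1+\lambda_2\tau_2=1-t_1\neq 1$ — the B\'ezout coefficients are far from unique, so no pointwise formula applied to a single Koll\'ar--Jelonek output is reliable. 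The paper instead first normalizes so that the $\tr(f_i)$ are $\F$-linearly independent in the free algebra with trace; then each $\tau_i$ is an \emph{affine-linear} polynomial in the free generators $t_j$ of $T$. Writing out the coefficient matrix and row-reducing, one of two things happens: either some scalar combination $\sum\lambda_i\tau_i$ is a nonzero constant (the desired conclusion), or the $\tau_i$ extend to a free generating set of $T$, in which case $1\in(\tau_1,\dots,\tau_r)$ is impossible. Your scaling heuristic does not exploit this affine-linearity, which is the structural fact that actually closes the argument; it also introduces an unnecessary ``uniformity over $N$'' issue, since once the identity is lifted formally to $T$ at a single large $N$ no further compatibility between different $N$ is needed.
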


\begin{proof}
We can assume that $\tr(f_i)$'s are linearly independent as elements in the free algebra with trace 
and we also assume that $\tr(f_i)$ cannot be written as $\tr(f_i')$ for a polynomial $f_i'$ with $\deg(f_i')<\deg(f_i)$. 

For every fixed $n$ we can evaluate a noncommutative polynomial $f$ on the symbolic $n\times n$ matrices $\ksi_k=(x_{ij}^{(k)})$, $1\leq k\leq g$, and 
the trace of $f(\ksi_1,\dots,\ksi_g)$ is a commutative polynomial in the variables $x_{ij}^{(k)}$, $1\leq i,j\leq n$, $1\leq k\leq g$. 
Note that $\deg(\tr(f))\leq \deg(f)$ for every $f\in \F\X$, 
where $\deg(\tr(f))$ denotes the degree  of the commutative polynomial $\tr(f(\ksi_1,\dots,\ksi_g))$.
By Theorem \ref{JelKol}, the condition (\ref{pogoj0}) implies  that there exist $h_1^{(n)},\dots, h_{r}^{(n)}\in \F[x_{ij}^{(k)}:1\leq i,j\leq n, 1\leq k\leq g]$ such that
\begin{equation}\label{predR}
1=\sum h_i^{(n)}\tr(f_i)
\end{equation}
with 
\begin{equation}\label{meje}
\deg(\tr(f_i))\deg(h_i^{(n)})\leq N'(d_1,\dots,d_r,n^2g),
\end{equation}
 where $d_i=\deg(f_i)$ and $d_1\geq \dots\geq d_r$. 
Applying the Reynolds operator 
(for the usual action of ${\rm GL_n}$ on the polynomial ring $\F[x_{ij}^{(k)}:1\leq i,j\leq n, 1\leq k\leq g]$; i.e., 
$\g\in {\rm GL}_n$ sends a variable $x_{ij}^{(k)}$ into the $(i,j)$-entry of the matrix 
$\g^{-1}\ (x_{ij}^{(k)})\ \g$)
to the equality (\ref{predR}) we can assume that the $h_i^{(n)}$ are invariant, 
and thus pure trace polynomials by \cite[Theorem 1.3]{Pro}. 
 The above bound (\ref{meje})  is independent of $n$ for 
 $r\leq n^2 g$ (see Theorem \ref{JelKol}). 
 Thus, the degree of $h_i^{(n)}\tr(f_i)$ can be in this case bounded above by $d_1\cdots d_r$.

We thus get for every sufficiently large  $n$; i.e, for $n\geq d_1\cdots d_r$ and $n\geq\sqrt\frac{r}{g}$, a trace identity for $M_n(\F)$,
\beq\label{eq:trId}
1=\sum h_i^{(n)}\tr(f_i),
\eeq
where $\deg  (h_i^{(n)}\tr(f_i))\leq n$. 
Let us fix $n$ with these properties.
Since any nontrivial  pure trace identity on $n\times n$ matrices has degree at least $n+1$ by \cite[Theorem 4.5]{Pro}, the above identity \eqref{eq:trId} must be trivial, which means that it holds in the free algebra with trace. 
A little care is needed at this point. 
If $f_1,\dots,f_r$ do not all have  zero constant term, then 
\eqref{eq:trId}
is an identity in the free algebra with trace if we replace $\tr(f_i)$ by \[ \tau_i=\tr(\ol{f_i})+\alpha_i n, \quad 1\leq i\leq r,\] 
where $\ol{f_i}$ is the sum of all nonconstant terms of  $f_i$, and $\alpha_i$ is its  constant term, and thus obtain an identity
\begin{equation}\label{tau}
1=\sum h_i^{(n)}\tau_i.
\end{equation} 
Since all polynomials that appear in \eqref{tau} are pure trace polynomials, they belong to the commutative subalgebra $T$ of the free algebra with trace, which is generated by the trace monomials $\tr(x_{i_1}\cdots x_{i_t})$ modulo the relations $\tr(x_{i_1}x_{i_2}\cdots x_{i_t})=\tr(x_{i_2}\cdots x_{i_t}x_{i_1})$.
Let $w_1,w_2,\dots$ be the representatives of the cyclic equivalence classes of words in the variables $x_1,\dots,x_g$.
Then $T$ is the free commutative algebra generated by $t_1,t_2,\dots$, where $t_i=\tr(w_i)$.

Let us  denote by $t_0$ the empty word; i.e., the identity of $T$, and write 
$$\tau_i=\sum_{j=1}^m \alpha_{ij}t_j+\alpha_{im+1}t_0.$$
Note that $\alpha_{im+1}=\alpha_i n$. 
As we assumed that $\tr(f_i)$ are linearly independent, 
 also $\tau_i$ are linearly independent. 
Indeed, assume that $\sum \lambda_i\tau_i=0$ for some $\lambda_i\in \F$. 
Then $\sum \lambda_i\tr(f_i)=0$ on $M_n(\F)$ for the chosen $n\geq\deg(f_i)$.
Thus, $\sum\lambda_if_i$ has zero trace on $M_n(\F)$, so it is cyclically equivalent to a polynomial identity on $M_n(\F)$ \cite[Theorem 4.5]{BK09}.
As $n\geq\deg(f_i)$ we can conclude that $\sum \lambda_i \tr(f_i)=0$ is an identity in the free algebra with trace  and due to the assumption, $\lambda_i=0$ for all $i$.

Note that this implies that the matrix $(\alpha_{ij})$ has linearly independent rows and can be brought to the reduced row echelon form. In particular,
 we can express $t_{i_1},\dots,t_{i_{r}}$ as linear combinations of $\tau_1,\dots,\tau_r$ and $t_{i_{r+1}},\dots,t_{i_m}$, for some $\{i_0,i_1,\dots,i_m\}=\{0,1,\dots,m\}$. 
If the last row  in the reduced echelon form of this matrix has all zeros except for the last entry, then
$1=\sum \lambda_i \tau_i$. In this case $\tr(\sum \lambda_i f_i-\frac{1}{n})=0$ on $M_n(\F)$, which implies that $\sum \lambda_if_i$ is cyclically equivalent to a nonzero scalar. 
Otherwise we can choose (free) generators of $T$ that include $\tau_1,\dots,\tau_r$ and the above identity \eqref{tau} cannot hold since it does not hold in the free commutative algebra.
\end{proof}

\begin{proof}[Proof of Theorem {\rm\ref{spur}}]
To prove the nontrivial direction, assume that  no linear combination of $f_i$'s is cyclically equivalent to a nonzero scalar. 
The condition \eqref{trpogoj} implies that the equations 
$$\tr(f_i(x_1,\dots,x_g))=0\;\;(1\leq i\leq r),\quad \tr(f(x_1,\dots,x_g)+1)=0$$ 
do not have a common solution in $M(\F)$. 
Hence \[\sum \lambda_i f_i+\lambda (1+f)\cyc 1\] for some $\lambda_i,\lambda\in \F$ by Lemma \ref{sibekspur}.
By our assumption, $\lambda\neq 0$. 
Thus, 
\[f\cyc \sum \mu_i f_i+\mu \] for some $\mu_i,\mu\in \F$.
If $\mu\neq 0$, then the initial condition \eqref{trpogoj} is violated again by our assumption and Lemma \ref{sibekspur}. Hence $\mu=0$ and $f\cyc \sum \mu_i f_i$.
\end{proof}

\subsection{Bounds on the size of matrices in Theorem \ref{spur}}

The proof of Lemma \ref{sibekspur} reveals a bound on the size of matrices for which it suffices to test the condition of this lemma and of Theorem \ref{spur} in order to draw the conclusion.
If the implication (\ref{trpogoj}) 
 holds for all 
$A\in M_N(\F)^g$ for $N=\max\Big\{d_1\cdots d_rd,\sqrt\frac{g}{r}\Big\}$, 
where $d_i=\deg f_i$, $d=\deg f$, then it holds for all $A\in M_n(\F)^g$ for all $n\in \N$.
In view of Theorem \ref{JelKol} we can sometimes sharpen this bound in the case that there exists $m$ satisfying 
$d_1\cdots d_{m^2g}d_r\leq m<\sqrt\frac{g}{r}$.

\subsection{Passing between a real closed field and its algebraic closure}

\begin{proposition}\label{real}
Let $R$ be a real closed field {\rm (}e.g.~$R=\R${\rm )} and let $C$ be its algebraic closure. For polynomials $f_1,\dots,f_r,f\in R\X$ 
the following conditions are equivalent:
\begin{enumerate}[\rm (i)]
\item\label{it:1} 
For every $n\in \N$ and all $A\in M_n(C)^g$ we have
$\tr(f_1(A))=\dots=\tr(f_r(A))=0$ implies $\tr(f(A))=0$;
\item\label{it:2} There exist $\lambda_i\in C$ such that $\sum \lambda_i f_i\cyc 1$ or $\sum \lambda_i f_i\cyc f$;
\item\label{it:3} There exist $\lambda_i\in R$ such that $\sum \lambda_i f_i\cyc 1$ or $\sum \lambda_i f_i\cyc f$;
\item\label{it:4} For every $n\in \N$ and all $A\in M_n(R)^g$ we have $\tr(f_1(A))=\dots=\tr(f_r(A))=0$ implies $\tr(f(A))=0$.
\end{enumerate}
\end{proposition}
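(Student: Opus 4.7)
The plan is to prove the cycle $(\ref{it:1}) \Leftrightarrow (\ref{it:2}) \Leftrightarrow (\ref{it:3}) \Leftrightarrow (\ref{it:4})$. The equivalence $(\ref{it:1}) \Leftrightarrow (\ref{it:2})$ is immediate from Theorem \ref{spur} applied with $\F = C$. For $(\ref{it:2}) \Leftrightarrow (\ref{it:3})$ the nontrivial direction is $(\ref{it:2}) \Rightarrow (\ref{it:3})$: expanding the cyclic equivalences $\sum \lambda_i f_i \cyc 1$ and $\sum \lambda_i f_i \cyc f$ in the basis of $R\X/[R\X,R\X]$ given by cyclic equivalence classes of words turns each into an inhomogeneous $R$-linear system in the unknowns $\lambda_i$, and such a system has a solution over $C$ if and only if it has one over $R$. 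For $(\ref{it:3}) \Rightarrow (\ref{it:4})$: if $\sum \lambda_i f_i \cyc 1$ then $\sum \lambda_i \tr(f_i(A)) = n \neq 0$ for every $A\in M_n(R)^g$ with $n\geq 1$, making the hypothesis of $(\ref{it:4})$ unsatisfiable; if instead $\sum \lambda_i f_i \cyc f$ then $\tr(f(A)) = \sum \lambda_i \tr(f_i(A)) = 0$ directly.

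The main work is $(\ref{it:4}) \Rightarrow (\ref{it:3})$, for which I will first establish a real analogue of Lemma \ref{sibekspur}: \emph{if the equations $\tr(f_i)=0$ have no common solution in $M(R)^g$, then $\sum \lambda_i f_i \cyc 1$ for some $\lambda_i \in R$.} The tool is the realification $R$-algebra homomorphism $\rho\colon M_n(C) \hookrightarrow M_{2n}(R)$, $B + iC \mapsto \bigl(\begin{smallmatrix} B & -C \\ C & B \end{smallmatrix}\bigr)$, which satisfies $\tr(\rho(X)) = 2\,\mathrm{Re}(\tr(X))$. Hence any tuple $A \in M_n(C)^g$ with $\tr(f_i(A))=0$ realifies to $\tilde A \in M_{2n}(R)^g$ with $\tr(f_i(\tilde A))=0$, so absence of common zeros in $M(R)^g$ forces absence of common zeros in $M(C)^g$; Lemma \ref{sibekspur} combined with the $C$-to-$R$ descent of the previous paragraph then delivers the required identity in $R$.

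Granted the real analogue, the proof of Theorem \ref{spur} transposes to the real setting. Assume $(\ref{it:4})$ and that no $\lambda_i \in R$ satisfies $\sum \lambda_i f_i \cyc 1$ (otherwise $(\ref{it:3})$ already holds). Then the traces of $f_1,\dots,f_r,f+1$ have no common real zero: any $A \in M_n(R)^g$ with $n\geq 1$ annihilating each $\tr(f_i)$ satisfies $\tr(f(A))=0$ by $(\ref{it:4})$, hence $\tr((f+1)(A))=n\neq 0$. The real analogue applied to the extended family produces $\sum \lambda_i f_i + \lambda(f+1) \cyc 1$ in $R$ with $\lambda \neq 0$, so $f \cyc \sum \mu_i f_i + \mu$; if $\mu$ were nonzero, evaluating at any common real zero $A$ of the $\tr(f_i)$ (which exists for some $n\geq 1$ by the real analogue and the running assumption) would give $\tr(f(A))=\mu n\neq 0$, contradicting $(\ref{it:4})$. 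Hence $\mu = 0$ and $f \cyc \sum\mu_i f_i$, establishing $(\ref{it:3})$.

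The only step where the $R$-versus-$C$ distinction is genuinely exploited is the realification trick, and it works cleanly because the trace of the realification equals twice the real part of the original trace. I expect no serious technical obstacle beyond this: the remaining steps are either direct invocations of Theorem \ref{spur} or elementary linear algebra over a field extension.
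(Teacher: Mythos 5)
Your proof is correct and takes essentially the same approach as the paper: the crux in both is (a) the realification $M_n(C)\hookrightarrow M_{2n}(R)$ to transfer absence of common real zeros to absence of common complex zeros, (b) invocation of Lemma~\ref{sibekspur} over $C$, and (c) a $C$-to-$R$ descent of the coefficients. The only cosmetic differences are that you phrase the descent via solvability of linear systems under field extension rather than the paper's ``take real parts,'' and you package the realification argument as a real analogue of Lemma~\ref{sibekspur} and then prove (iv)$\Rightarrow$(iii) directly, whereas the paper proves (iv)$\Rightarrow$(ii) and closes the loop through (ii)$\Rightarrow$(iii); these are equivalent organizations of the same ideas.
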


\begin{proof}
By Theorem \ref{spur}, \eqref{it:1} is equivalent to \eqref{it:2}. 
By taking real parts, it is easy to see that \eqref{it:2} implies \eqref{it:3}.
The implication \eqref{it:3} to \eqref{it:4} is trivial.
We will prove that \eqref{it:2} follows by assuming \eqref{it:4}.
To obtain a contradiction suppose first that the equations 
\beq\label{eq:commonR}
\tr(f_i(x_1,\dots,x_g))=0, \; 1\leq i\leq r
\eeq
 do not have a common solution in $M(R)$ but do have one in $M(C)$.
Let $a_1,\dots,a_g\in M_n(C)$ be such that $\tr(f_i(a_1,\dots,a_g))=0$ for $1\leq i\leq r$.
Write $a_j=b_j+\mathbbm{i}\,c_j$, where $b_j,c_j\in M_n(R)$, and define
$$
\widetilde{a_j}=\left(
\begin{array}{cc}
b_j&c_j\\
-c_j&b_j
\end{array}
\right).
$$
We have \[\tr(f_i(\wt{a_1},\dots,\wt{a_g}))=2{\rm Re}(\tr(f_i(a_1,\dots,a_g)))=0\] for every $1\leq i\leq r$.
Thus, $\wt{a_1},\dots,\wt{a_g}$ is a common solution of the equations 
\eqref{eq:commonR}
in $M(R)$, a contradiction. 
By Lemma \ref{sibekspur} we therefore have $\sum \lambda_i f_i\cyc 1$ for some $\lambda_i\in C$. 

If the system \eqref{eq:commonR} does have a solution in $M(R)$, 
then \eqref{it:4} implies that the equations 
\[\tr(f_i(x_1,\dots,x_g))=0,\; \tr(1+f(x_1,\dots,x_g))=0\] do not have a common solution in $M(R)$, 
and by the previous step, applied to polynomials $f_1,\dots,f_r,1+f$, 
 they also do not have a solution in $M(C)$. 
By Lemma \ref{sibekspur}, $f\cyc \sum \lambda_i f_i+\lambda 1$ for some $\lambda_i,\lambda\in C$.
Since we are assuming that the equations \eqref{eq:commonR} have a common solution in $M(R)$, 
\eqref{it:4} implies $\lambda=0$.
\end{proof}

\section{A tracial moment problem}\label{sec:mp}

The  main result of this section, Corollary \ref{cor:mp},
solves a constrained truncated tracial moment problem.
For its proof we dualize the statement of Theorem \ref{spur}.
We refer the reader to \cite{Bur} and the references therein for
more details on tracial moment problems.

\smallskip
Let $\F\in \{\R,\CC\}$.
We say that a linear functional $L:\F\X_d\to \F$ is \emph{tracial} if it vanishes on sums of commutators, or equivalently, if $L(v)=L(w)$ for $v\cyc w$.
The simplest examples of such $L$ are obtained as follows.
For $A\in M_n(\F)^g$  define \[\phi_A:\F\X_d\to \F, \quad \phi_A(p)=\tr(p(A)),\]  and let 
$$\cC={\rm span}\{\phi_A\mid A\in M_n(\F)^g,n\in \N\} \subseteq \F\X_d^*.$$

Tracial linear functionals can be described in terms of  moment sequences.
A sequence $(\alpha_w)_{w\in \X_d}$ in $\F$ 
is a {\em truncated tracial moment sequence} if $\alpha_w=\alpha_v$ for $v\cyc w$. 
Note that any element in $\cC$ is tracial and defines a (truncated) tracial moment sequence. 


\begin{proposition}\label{prop:dual}
If $L$ is a tracial linear functional on $\F\X_d$, 
then $L\in \cC$.
\end{proposition}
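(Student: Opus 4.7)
The plan is to set this up as a finite-dimensional duality argument. Let $K \subseteq \F\X_d$ denote the subspace of sums of commutators; by definition, the tracial linear functionals on $\F\X_d$ are precisely $K^\perp \subseteq \F\X_d^*$. Clearly $\cC \subseteq K^\perp$, so the proposition reduces to the reverse containment $K^\perp \subseteq \cC$. Since $\F\X_d$ is finite-dimensional, this is equivalent via the double annihilator relation to identifying the pre-annihilator
$$\cC^\perp := \{p \in \F\X_d : \phi_A(p) = 0 \text{ for every } n\in\N \text{ and every } A\in M_n(\F)^g\} = K.$$

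The inclusion $K \subseteq \cC^\perp$ is immediate from $\tr([a,b])=0$. For the reverse, pick $p \in \cC^\perp$ and choose any $n > d$. Then $\tr(p(A))=0$ on all of $M_n(\F)^g$, so by \cite[Theorem 4.5]{BK09} (exactly as invoked in the proof of Lemma \ref{sibekspur}) $p$ is cyclically equivalent to a polynomial identity $p_0$ of $M_n(\F)$ with $\deg p_0 \leq d$. Since every nonzero trace identity of $M_n(\F)$ has degree at least $n$ by \cite[Theorem 4.5]{Pro}, and $\deg p_0 \leq d < n$, we conclude $p_0=0$; hence $p \cyc 0$ and thus $p \in K$.

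The entire content of Proposition \ref{prop:dual} is concentrated in this last step — recognising that free polynomials with identically vanishing trace on all matrix tuples are necessarily sums of commutators. This is not a new fact: it is the same machinery underlying Lemma \ref{sibekspur}, so Proposition \ref{prop:dual} is obtained essentially for free by dualising, with the linear-algebraic packaging being the only remaining work.
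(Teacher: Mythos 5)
Your proof is correct and is essentially the same argument as the paper's: both are finite-dimensional duality arguments that reduce the statement to the fact that a free polynomial with identically vanishing trace on all matrix tuples is a sum of commutators. The only cosmetic differences are the packaging (you phrase it via the double-annihilator relation $\cC = (\cC^\perp)^\perp$ and identify $\cC^\perp = K$, while the paper phrases it as a separation argument: if $L \notin \cC$ pick $p \in (\F\X_d^*)^* \cong \F\X_d$ with $p|_\cC = 0$, $p(L) = 1$), and the citation used for the key step (you combine \cite[Theorem 4.5]{BK09} with the degree bound from \cite[Theorem 4.5]{Pro}, which requires the small extra observation that cyclic equivalence respects homogeneous components so that the PI $p_0$ can indeed be taken of degree at most $d$; the paper simply invokes \cite[Corollary 4.4]{Pro} directly for $p \cyc 0$).
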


\begin{proof}
If $L\not \in \cC$, then there  exists a linear functional $p\in (\F\X_d^*)^*\cong \F\X_d$ such that
$p(\phi)=0$ for every $\phi\in \cC$ and $p(L)=1$.
By definition of $\cC$ we have $\tr(p(A))=0$ for all $A\in M_n(\F)^g$, $n\in \N$, 
which implies  $p\cyc 0$ (see e.g.~\cite[Corollary 4.4]{Pro}). 
As $L$ is tracial,  $p(L)=L(p)=0$, a contradiction.
\end{proof}

To  give an explicit representation for the tracial linear functional $L$
as a linear combination of the $\phi_A$ with $g$-tuples $A$ of $d\times d$ matrices,  
 we present an alternative constructive  proof of Proposition \ref{prop:dual}.

\begin{proof}[Alternative proof]
Given a tracial moment sequence $(L(w))_{w\in \X_d}$ we  show how to find $g$-tuples $A_{\ell}$ such that 
$L(p)=\sum \lambda_{\ell}\phi_{A_{\ell}}$.
We proceed inductively. 
Assume that there exist $A_{\ell}$, $1\leq {\ell}\leq \ell_m$, such that $L(p)=\sum \phi_{A_{\ell}}$ 
for all $p\in \F\X_m'$. 
We want to find an element of $\cC$ which coincides with $L$ on $\F\X_{m+1}'$. 
Define $L_{m+1}=L-\sum_{\ell=1}^{\ell_m} \phi_{A_{\ell}}$.
It is enough to choose matrices $a_{{\ell}1},\dots,a_{{\ell}g}\in M_{m+1}(\F)$, $\ell_m+1\leq \ell\leq \ell_{m+1}$,
such that $\tr(p(a_{\ell 1},\dots,a_{\ell g}))=0$ for  $p\in \F\X_{m}'$
and $L_{m+1}(p)=\sum_{\ell=\ell_m+1}^{\ell_{m+1}} \tr(p(a_{\ell1},\dots,a_{\ell g}))$ for $p$ homogeneous of degree $m+1$.

Choose representatives $w_{\ell_m+1},\dots,w_{\ell_{m+1}}$ of cyclic equivalence classes of words in the variables $x_1,\dots,x_g$ of degree $m+1$.
Let $w_{\ell}=x_{i_1}^{j_1}\cdots x_{i_s}^{j_s}$, where $\sum_k j_k=m+1$. 
We denote $s_k=\sum_{i=1}^{k} j_i$. 
Setting $a_{\ell i}=0$ at the beginning, we define  matrices $a_{\ell i}\in M_{m+1}(\F)$ $(1\leq i\leq g)$ as follows. 
We let $k$ vary from $1$ to $s$, and at step $k$ we replace $a_{\ell i_k}$ by
\[
\begin{cases}
a_{\ell i_k}+\sum_{u=s_{k-1}+1}^ {s_k}e_{u,u+1}&\textrm{if $k<s$},\\[.2cm]
a_{\ell i_k}+\sum_{u=s_{k-1}+1}^ {s_k-1}e_{u,u+1}+L_{m+1}(w_\ell)e_{m+1,1}&\textrm{if $k=s$}.
\end{cases}
\]
Here $e_{ij}$ are the standard $(m+1)\times(m+1)$ matrix units.

We claim that the only word in $a_{\ell1},\dots,a_{\ell g}$ of degree $\leq m+1$ with nonzero trace is cyclically equivalent to $w_\ell$. 
A necessary condition for a word $w$ in $a_{\ell1},\dots,a_{\ell g}$, $w=a_{\ell p_1}^{r_1}\cdots a_{\ell p_s}^{r_s}$,
 of degree $m'$, $1\leq m'\leq m+1$, to have  nonzero trace 
is the existence of a sequence $(\ti e_j)_{j=1}^{m+1}$ of matrix units from the set $E=\{e_{u,u+1},e_{m+1,1}\mid 1\leq u\leq m\}$ such that 
$\tr(\tilde e_1\cdots \ti e_{m+1})=1$, and if $\sum_{i=1}^{k-1} r_i<j\leq \sum_{i=1}^{k} r_i$ then $\ti e_j$ appears in $a_{\ell p_k}$.
The product of the elements in $E$ has  nonzero trace only in a unique order (up to  cyclic
permutations).
Since every element in $E$ appears only in one $a_{\ell i}$, this order determines $p_1,\dots,p_s$. 
Thus, $\tr(w(a_{\ell1},\dots,a_{\ell g}))=L_{m+1}(w)$ for $w\cyc w_\ell$ and $0$ otherwise.
Therefore, $a_{\ell1},\dots,a_{\ell g}$, $\ell_m+1\leq \ell\leq \ell_{m+1}$, have the desired properties.

We have thus found $g$-tuples $A_\ell\in M_{n_\ell}(\F)^g$, $1\leq \ell\leq \ell_d$, such that $L(p)=\sum \phi_{A_\ell}(p)$ for every $p\in \F\X_d'$.
Take $A_0=(0,\dots,0)\in \F^g$, and notice that 
$L(p)=\sum \phi_{A_\ell}(p)+(L(\emptyset)-n)\phi_{A_0}(p)$, where $n=\sum \phi_{A_\ell}(1)=\sum n_\ell$, for every $p\in \F\X_d$. 
\end{proof}

Let us fix  polynomials $f_1,\dots,f_r\in\F\X_d$ and write $f_i=\sum\lambda_{ij}w_j$.
We say that a sequence $(L(w))_{w\in \X_d}$ is a {\em constrained truncated tracial moment sequence} if it is a truncated tracial moment sequence and if 
$L(f_i)=\sum_j \lambda_{ij} L(w_j)=0$ for $1\leq i\leq r$.
We define a constrained analog of $\cC$,
$$\mathcal{S}={\rm span}\{\phi_A\in \cC\mid \phi_A(f_i)=0,\; 1\leq i\leq r\}.$$
Note that every element of $\sS$ defines a constrained truncated tracial moment sequence.

\begin{corollary}[Constrained truncated tracial moment problem]
\label{cor:mp}
If $(L(w))_{w\in \langle X\rangle_d}$  is a constrained tracial moment sequence with $L(1)=1$, then $L\in \sS$.
\end{corollary}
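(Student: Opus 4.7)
The plan is to dualize, using Theorem \ref{spur} in the role played by Procesi's vanishing-trace theorem in Proposition \ref{prop:dual}. Since $\F\X_d$ is finite-dimensional, I may tacitly invoke the reflexive identification $(\F\X_d^*)^* \cong \F\X_d$ given by evaluation $p \mapsto (\phi \mapsto \phi(p))$.

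First, I would suppose for contradiction that $L \notin \sS$. Because $\sS$ is a subspace of the finite-dimensional vector space $\F\X_d^*$, standard linear algebra produces an element $p \in \F\X_d$ which annihilates $\sS$ while $L(p) \neq 0$. Unpacking the vanishing condition on $p$ via the definition of $\sS$, this means that for every $n \in \N$ and every $A \in M_n(\F)^g$ with $\tr(f_1(A)) = \dots = \tr(f_r(A)) = 0$, one has $\tr(p(A)) = 0$.

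This is precisely the hypothesis of Theorem \ref{spur} applied to the tuple $f_1,\dots,f_r,p$. Hence either (i) $p \cyc \sum_i \mu_i f_i$ for some $\mu_i \in \F$, or (ii) $\sum_i \mu_i f_i \cyc \lambda$ for some $\mu_i \in \F$ and some nonzero $\lambda \in \F$. In case (i), applying $L$, which is tracial and so descends to cyclic equivalence classes, gives $L(p) = \sum_i \mu_i L(f_i) = 0$, contradicting $L(p) \neq 0$. In case (ii), applying $L$ yields $0 = \sum_i \mu_i L(f_i) = L(\lambda) = \lambda L(1) = \lambda$, again a contradiction.

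I do not foresee any genuine obstacle beyond bookkeeping: the duality step requires only reflexivity of a finite-dimensional space and the fact that a tracial functional is well-defined on cyclic equivalence classes. All of the geometric content sits inside Theorem \ref{spur}, which is what makes the constrained tracial moment problem essentially automatic once the Spurnullstellensatz is in hand.
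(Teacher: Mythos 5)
Your proposal reproduces the paper's argument essentially verbatim: dualize via $(\F\X_d^*)^*\cong\F\X_d$, extract a separating $p$, feed the vanishing condition into the Spurnullstellensatz, and derive a contradiction in each of the two cases. The one omission is that in Section \ref{sec:mp} the ambient field is $\F\in\{\R,\CC\}$, whereas Theorem \ref{spur} is stated for an algebraically closed field. When $\F=\CC$ your appeal to Theorem \ref{spur} is direct, but when $\F=\R$ you must also invoke Proposition \ref{real} (the transfer between a real closed field and its algebraic closure) to justify applying the Nullstellensatz to the separating polynomial $p\in\R\X_d$; the paper cites both results for precisely this reason. With that reference added the argument is complete and coincides with the paper's proof.
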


\begin{proof}
If $L\not\in \sS$ then there exists an element $p\in(\F\X_d^*)^*$ such that $p(\phi)=0$ for all $\phi\in \sS$ and $p(L)=1$.
We have $\tr(p(A))=0$ for all $A\in M_n(\F)^g$ with the property $\tr(f_i(A))=0$ for all $1\leq i\leq r$.
Thus, Theorem \ref{spur} and Proposition \ref{real} imply that $p\cyc\sum \lambda_i f_i$  or $\sum \lambda_i f_i\cyc 1$ for some $\lambda_i\in \F$.
In the former case we have $L(p)=\sum \lambda_i L(f_i)=0$, which contradicts $L(p)=1$, 
in the last case $L(1)=\sum \lambda_iL(f_i)=0$, which is contrary to the assumption $L(1)=1$.
\end{proof}

\subsubsection*{Acknowledgments.}
This paper was written while the second author was visiting The University of Auckland.
She would like to thank the Department of Mathematics for its hospitality.

\end{document}